\newtheorem{thm}{Theorem}[section]
\theoremstyle{remark} 
\newtheorem{example}[thm]{Example}
\newcommand{\lap}{\bigtriangleup}
\newcommand{\grad}{\bigtriangledown}
\newcommand{\floor}[1]{\lfloor #1 \rfloor}
\newcommand{\Rn}{\mathbb{R}^n}
\newcommand{\nsphere}{S^{n-1}}
\newcommand{\dcube}{\ncubep{d}}
\newcommand{\ncubep}[1]{\mathcal{C}^{#1}}
\newcommand{\RR}{\mathbb{R}}
\newcommand{\half}{\frac{1}{2}}
\newcommand{\LtwoNorm}[1]{\| #1 \|_{L_2}}
\newcommand{\LipNorm}[1]{\| #1 \|_{Lip}}
\newcommand{\spaceo}{\hspace{2 mm}}
\newcommand{\Ell}{\mathcal{E}}
\newcommand{\tridva}{\frac{3}{2}}
\newcommand{\setsep}{ \spaceo | \spaceo}
\newcommand{\Bsetsep}{ \spaceo \Big | \spaceo}
\newcommand{\combchoose}[2]{\left( \begin{array}{l} {#1} \\ {#2} \end{array} \right)}
\begin{document}

\title{On Projections of Metric Spaces}
\date{}
\author{ Mark Kozdoba }
\maketitle

\begin{abstract}
Let $X$ be a metric space and let $\mu$ be a probability measure on it. 
Consider a Lipschitz map $T: X \rightarrow \Rn$, with Lipschitz constant $\leq 1$. 
Then one can ask whether the image $TX$ can have large projections on many directions. 
For a large class of spaces $X$, we show that there are directions $\phi \in \nsphere$ on which the projection of the image $TX$ is small on the average, with bounds 
depending on the dimension $n$ and the eigenvalues of the Laplacian on $X$. 
\end{abstract}

\section{Introduction}
Let $(X,m)$ be a metric space and let $\mu$ be a probability measure on $X$. Consider a 
Lipschitz map $T:X \rightarrow \Rn$, with $\LipNorm{T}\leq 1$, where $\Rn$ is taken with the standard inner product $<\cdot,\cdot>$ and the corresponding Euclidean norm $|\cdot|$. 

Define another (semi) norm on $\Rn$ by setting for each $\theta \in \Rn$, 
\[
   \|\theta\|_{L_2} := \left( \int <Tx,\theta>^2 d\mu(x) \right)^{\half}.
\]

This norm is known as the covariance structure of the push-forward measure $T\mu$, and we shall regard it as measuring the size of the projection of the image 
of $X$ onto the direction $\theta$. It is then natural to ask, for a given space $X$, whether an image of $X$ can have big projections on many directions.  For example, take $X$ to be the 
unit interval $[0,1]$, with Lebesgue measure, and let $T:[0,1] \rightarrow \Rn$ be a Lipschitz map (here and in what follows, Lipschitz will mean ``having Lipschitz constant $\leq 1$").  By considering a few pictures, it 
is clear that if $n$ is large then there should be a direction with a small projection and we 
are interested in quantifying this phenomenon. It turns out that this particular situation was 
investigated already by Gohberg and Krein, in a different formulation and context (related to 
 spectral properties of smooth kernels), see the book \cite{GK}, Chapter III.10.3 . 
\begin{thm}[\cite{GK}]
\label{curves_GK} There is an absolute constant $c>0$, such
that for every $n>1$ and every $T:[0,1] \rightarrow \Rn$ with $\LipNorm{T}\leq 1$, there is 
$\theta \in \nsphere$ such that $\LtwoNorm{T} \leq c \cdot n^{-\tridva}$.
\end{thm}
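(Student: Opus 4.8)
The plan is to regard $\int_0^1\langle Tx,\theta\rangle^2\,d\mu(x)$ (the covariance form of the push-forward $T\mu$) as a quadratic form in $\theta$, and to exhibit a direction $\theta$ that annihilates its low-frequency part while being, on average over a suitable subspace, small on the high-frequency part.

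First I would set up a Fourier expansion. Since $\LipNorm{T}\le 1$, the curve $T:[0,1]\to\Rn$ is absolutely continuous with $|T'|\le 1$ almost everywhere; to obtain a genuinely periodic curve I would reflect $T$ evenly (extend by $x\mapsto T(|x|)$ on $[-1,1]$ and then $2$-periodically), which yields a $1$-Lipschitz map on the circle $\RR/2\ZZ$ with a pure cosine expansion $T(x)=a_0+\sum_{k\ge 1}a_k\cos(\pi k x)$, where $a_k\in\Rn$ and $a_0=\int_0^1 T\,d\mu$. Applying Parseval to the (odd, bounded) derivative gives a Wirtinger-type inequality $\sum_{k\ge 1}k^2|a_k|^2\le C$ with $C$ an absolute constant, and applying Parseval to the scalar function $\langle T(\cdot),\theta\rangle$ gives, for every $\theta\in\Rn$,
\[
   \int_0^1\langle Tx,\theta\rangle^2\,d\mu(x)=\langle a_0,\theta\rangle^2+\half\sum_{k\ge 1}\langle a_k,\theta\rangle^2 .
\]

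Next I would fix an integer $m$ with $m+1<n$ and let $W$ be the orthogonal complement in $\Rn$ of $\mathrm{span}\{a_0,a_1,\dots,a_m\}$, a subspace of dimension $d\ge n-m-1$. For $\theta$ in the unit sphere of $W$ the terms with $k\le m$ above vanish, so with $P_W$ the orthogonal projection onto $W$ and $\theta$ chosen uniformly at random in the unit sphere of $W$ (so that $\Exp_\theta\langle a_k,\theta\rangle^2=|P_W a_k|^2/d\le|a_k|^2/d$),
\[
   \Exp_\theta\int_0^1\langle Tx,\theta\rangle^2\,d\mu(x)=\frac{1}{2d}\sum_{k>m}|P_W a_k|^2\le\frac{1}{2d}\sum_{k>m}|a_k|^2\le\frac{1}{2d(m+1)^2}\sum_{k>m}k^2|a_k|^2\le\frac{C}{2dm^2}.
\]
Hence some unit $\theta\in W\subset\nsphere$ satisfies $\int_0^1\langle Tx,\theta\rangle^2\,d\mu\le C/\bigl(2(n-m-1)m^2\bigr)$ (equivalently, this is the elementary bound ``smallest eigenvalue $\le$ trace over dimension'' for the covariance operator of $T\mu$ compressed to $W$). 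Finally I would optimize: $(n-m-1)m^2$ is of order $n^3$, maximized near $m\approx\tfrac23 n$, and the finitely many small $n$ for which $m+1<n$ fails are absorbed into the constant; this gives $\int_0^1\langle Tx,\theta\rangle^2\,d\mu\le c^2 n^{-3}$, i.e. $\LtwoNorm{T}\le c\,n^{-\tridva}$.

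The step I expect to carry the real content is the averaging over $W$, not the dimension count. Using only the trivial estimate $\langle a_k,\theta\rangle^2\le|a_k|^2$, valid for \emph{every} $\theta\in W$, would give merely $\lesssim 1/m^2$, and then optimizing ($m\asymp n$, forced by $\dim W\ge 1$) would yield only the weaker rate $n^{-1}$; it is the extra factor $1/d$ --- i.e.\ the gain from choosing $\theta$ at random in $W$ --- that pushes the exponent up to $\tridva$, after which one must balance how many frequencies to annihilate (the size of $m$) against how much room is left to average over (the size of $d=n-m-1$). The one genuine technical nuisance is that a Lipschitz curve on $[0,1]$ need not be periodic, so one cannot differentiate its Fourier series on the circle directly; the even reflection disposes of this at the cost of an absolute constant and, conveniently, leaves the ``bad'' subspace of dimension $m+1$ rather than $2m+1$.
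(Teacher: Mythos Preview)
Your argument is correct. The even reflection produces a $1$-Lipschitz map on $\RR/2\ZZ$, Parseval on the derivative gives $\sum_{k\ge 1}k^2|a_k|^2\le C$, and the two-step ``annihilate the first $m$ coefficients, then average over the remaining $d\ge n-m-1$ directions'' yields the bound $C/(dm^2)$, which optimizes to order $n^{-3}$.

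The paper does not prove Theorem~\ref{curves_GK} directly; it deduces it from (the continuous analogue of) Theorem~\ref{main_thm}, whose proof is operator-theoretic: one writes $\LtwoNorm{\theta}=\sup_{\phi\in\Ell}\langle\theta,\phi\rangle$ for the ellipsoid $\Ell=\widetilde{D}\circ\Gamma^{*}B_2$, and bounds the smallest axis of $\Ell$ via the Ky Fan inequality $s_{i+j-1}(BA)\le s_i(A)s_j(B)$, using $s_i(\Gamma^{*})=\lambda_i^{-1/2}$ and $\|\widetilde D\|_{HS}\le 1$ (hence $s_i(\widetilde D)\le i^{-1/2}$). Your argument is, in effect, the same computation unpacked in the eigenbasis: the cosines $\cos(\pi k x)$ are precisely the Neumann eigenfunctions of the Laplacian on $[0,1]$, your Wirtinger bound is the statement $\|\widetilde D\|_{HS}\le 1$, the ``kill $m$ low modes'' step replaces the factor $s_{m}(\Gamma^{*})=\lambda_m^{-1/2}\asymp m^{-1}$, and your ``smallest eigenvalue $\le$ trace over dimension'' step replaces the factor $s_{d}(\widetilde D)\le d^{-1/2}$. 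What the paper's packaging buys is generality (any graph or manifold, with the spectrum of the Laplacian entering as a black box); what your hands-on version buys is that it avoids the Ky Fan inequality and the abstract $\Gamma^{*}$ formalism entirely, at the cost of being specific to $[0,1]$.
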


Our objective is to provide a similar result for general metric spaces with measure on which an appropriate notion 
of a derivative can be defined. However, for the simplicity of presentation, 
we shall restrict our discussion to finite spaces $X$ which are graphs with the shortest 
path metric. The statement and the proof of the following result can, for instance, can be repeated with straightforward modifications in the setting of Riemannian manifolds. 

Let $X$ be a finite set and let $E\subset X \times X$ be a set of edges such that 
$(X,E)$ is a connected undirected graph without loops. For $x,y \in X$, write $x \sim y$ iff $(x,y) \in E$ and let $d(x)$ denote the degree of $x$. We endow $X$ with the shortest path metric and we set $\mu$ to be the stationary distribution of a simple nearest neighbour 
random walk on $(X,E)$, $\mu(x) = \frac{d(x)}{2|E|}$. 
Denote by $L(X)$ the space of real-valued functions on $X$. The Laplacian on 
$L(X)$ is defined by 
\[
 \lap(f)(x) = 2 \left( f(x) - \frac{1}{deg(x)} \sum_{y \sim x} f(y) \right).
\]
As is well known, this is a self-adjoint non-negative operator, with a one dimensional 
kernel consisting of constant functions. We denote by $\{\lambda_i\}_{i=1}^{|X|-1}$ the sequence of non-zero eigenvalues of $\lap$ in \textit{non-decreasing} order, including multiplicities.

\begin{thm} 
\label{main_thm}
There is an absolute constant $c>0$ such that for every graph $X$ as above, every 
$n>1$ and every Lipschitz map $T: X \rightarrow \Rn$, there is a direction 
$\theta \in \nsphere$ such that $\LtwoNorm{\theta} \leq c \cdot n^{-\half} \lambda^{-\half}_{\floor{n/2}-1}$.
\end{thm}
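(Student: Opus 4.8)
Write $A=\int Tx\otimes Tx\,d\mu(x)$ for the second moment operator of $T\mu$ on $\Rn$, so that $\LtwoNorm{\theta}^2=\langle A\theta,\theta\rangle$; the conclusion is equivalent to $\lambda_{\min}(A)\le c^2\,n^{-1}\lambda^{-1}_{\floor{n/2}-1}$, achieved by taking $\theta$ to be a unit bottom eigenvector of $A$. First I would pass to the centered operator $\bar A=\int(Tx-\bar T)\otimes(Tx-\bar T)\,d\mu(x)$, where $\bar T=\int Tx\,d\mu(x)$; then $A=\bar A+\bar T\otimes\bar T$ is a rank-one positive perturbation of $\bar A$. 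Let $\bar\sigma_1\ge\cdots\ge\bar\sigma_n\ge0$ be the eigenvalues of $\bar A$. On the $2$-dimensional span $U$ of two orthonormal eigenvectors of $\bar A$ attached to its two smallest eigenvalues one has $\langle\bar Au,u\rangle\le\bar\sigma_{n-1}|u|^2$, and $U$ contains a unit vector $u_0$ with $\langle\bar T,u_0\rangle=0$, whence $\langle Au_0,u_0\rangle=\langle\bar Au_0,u_0\rangle\le\bar\sigma_{n-1}$; thus $\lambda_{\min}(A)\le\bar\sigma_{n-1}$. So it suffices to prove $\bar\sigma_{n-1}\le2\,n^{-1}\lambda^{-1}_{\floor{n/2}-1}$. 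If $\bar\sigma_{n-1}=0$ (in particular if $n>|X|$, since then $\mathrm{rank}\,\bar A\le|X|-1<n-1$) there is nothing to prove, so assume $\bar\sigma_{n-1}>0$, which also forces $n\le|X|$.

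Now bring in the Laplacian. Recall that $\lap$ is self-adjoint and non-negative on $L_2(\mu)$, with Dirichlet form $\mathcal{E}(f,f):=\langle\lap f,f\rangle_{L_2(\mu)}=\frac1{|E|}\sum_{\{x,y\}\in E}(f(x)-f(y))^2$, and that its restriction to the mean-zero subspace $H_0=\{f:\int f\,d\mu=0\}$ has eigenvalues $\lambda_1\le\cdots\le\lambda_{|X|-1}$. Let $\theta_1,\dots,\theta_n$ be orthonormal eigenvectors of $\bar A$ and set $g_j(x)=\langle Tx-\bar T,\theta_j\rangle$, so that $\int g_j\,d\mu=0$ and $\int g_jg_k\,d\mu=\bar\sigma_j\delta_{jk}$. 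Since $\LipNorm{T}\le1$ we have $|Tx-Ty|\le1$ whenever $x\sim y$, and therefore
\[
   \sum_{j=1}^{n}\mathcal{E}(g_j,g_j)=\frac1{|E|}\sum_{\{x,y\}\in E}|Tx-Ty|^2\le1 .
\]
Put $\hat g_j=g_j/\sqrt{\bar\sigma_j}$ for $j\le n-1$; these are orthonormal in $H_0$, and $\mathcal{E}(g_j,g_j)=\bar\sigma_j\,\mathcal{E}(\hat g_j,\hat g_j)$ since $\mathcal{E}$ is quadratic.

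The crux is the elementary spectral inequality: if $u_1,\dots,u_m$ are orthonormal in a space on which a self-adjoint operator $S$ has eigenvalues $\mu_1\le\mu_2\le\cdots$, then $\sum_{i=1}^{m}\langle Su_i,u_i\rangle\ge\sum_{i=1}^{m}\mu_i$ (apply Courant--Fischer to the compression of $S$ to $\mathrm{span}(u_i)$). Applied to $S=\lap|_{H_0}$ and the $\hat g_j$ it gives $\sum_{j=1}^{n-1}\mathcal{E}(\hat g_j,\hat g_j)\ge\sum_{i=1}^{n-1}\lambda_i$. Combining this with the displayed bound and with $\bar\sigma_j\ge\bar\sigma_{n-1}$ for $j\le n-1$,
\[
   1\ \ge\ \sum_{j=1}^{n-1}\bar\sigma_j\,\mathcal{E}(\hat g_j,\hat g_j)\ \ge\ \bar\sigma_{n-1}\sum_{j=1}^{n-1}\mathcal{E}(\hat g_j,\hat g_j)\ \ge\ \bar\sigma_{n-1}\sum_{i=1}^{n-1}\lambda_i .
\]
Since $(\lambda_i)$ is non-decreasing, $\sum_{i=1}^{n-1}\lambda_i\ge\sum_{i=\floor{n/2}}^{n-1}\lambda_i\ge(n-\floor{n/2})\,\lambda_{\floor{n/2}}\ge\tfrac n2\,\lambda_{\floor{n/2}-1}$, so $\bar\sigma_{n-1}\le2\,n^{-1}\lambda^{-1}_{\floor{n/2}-1}$, hence $\LtwoNorm{\theta}=\langle A\theta,\theta\rangle^{1/2}\le\sqrt2\,n^{-1/2}\lambda^{-1/2}_{\floor{n/2}-1}$ for $\theta\in\nsphere$ a bottom eigenvector of $A$; so $c=\sqrt2$ works. (For the small values $n\in\{2,3\}$, where $\floor{n/2}-1=0$, the statement is vacuous under the convention $\lambda_0=0$, or follows from the cruder estimate $\bar\sigma_{n-1}\le1/\lambda_1$.)

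I do not expect a serious obstacle: the conceptual content is just that the Lipschitz condition caps the total Dirichlet energy of the coordinate functions at $1$, while the spectral gaps of $X$ force each of the roughly $n$ units of that energy to cost at least a $\lambda_i$. The two points that genuinely require care are the centering reduction -- the uncentered $\LtwoNorm{\cdot}$, unlike a covariance, is not translation invariant, which is precisely why the rank-one correction costs one eigenvalue and we must bound $\bar\sigma_{n-1}$ rather than $\bar\sigma_n$ -- and the bookkeeping showing that the sum of only the $n-1$ smallest nonzero Laplacian eigenvalues still contains $\lceil n/2\rceil$ terms each at least $\lambda_{\floor{n/2}-1}$.
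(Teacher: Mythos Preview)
Your argument is correct. The centering reduction via the rank-one perturbation is equivalent to the paper's projection onto $v^{\perp}$; your energy identity $\sum_j \mathcal{E}(g_j,g_j)=\frac{1}{|E|}\sum_{\{x,y\}\in E}|Tx-Ty|^2\le 1$ is precisely the Lipschitz input; and the trace inequality $\sum_{j=1}^{m}\langle S u_j,u_j\rangle\ge\sum_{i=1}^{m}\mu_i$ together with the final counting is clean and yields the stated bound with $c=\sqrt{2}$.

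The paper proceeds differently at the technical level. After centering it introduces the gradient $\grad:L_0(X)\to L(E)$ and its inverse $\Gamma$, then factors the dual ellipsoid of $\LtwoNorm{\cdot}$ as the image of the unit ball under $\widetilde{D}\circ\Gamma^{*}$, where $\widetilde{D}:L(E)\to\Rn$ integrates against $D_T$. The smallest axis is then bounded via the \emph{multiplicative} Ky Fan inequality $s_{i+j-1}(BA)\le s_i(A)s_j(B)$, using $s_i(\Gamma^{*})=\lambda_i^{-1/2}$ and $\|\widetilde{D}\|_{HS}\le 1\Rightarrow s_i(\widetilde{D})\le i^{-1/2}$. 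Your route replaces this operator factorization by a direct variational argument on the Dirichlet form, invoking only the \emph{additive} Ky Fan/Courant--Fischer trace bound. Conceptually both proofs say ``Lipschitz caps total gradient energy at $1$; the Laplace spectrum converts energy back to $L_2$ size,'' but your version is more elementary (no need to name $\Gamma$ or to quote the product singular-value inequality) and in fact gives the slightly sharper intermediate estimate $\bar\sigma_{n-1}\le\bigl(\sum_{i=1}^{n-1}\lambda_i\bigr)^{-1}$ before specializing to $\lambda_{\lfloor n/2\rfloor-1}$. The paper's formulation, on the other hand, makes the ellipsoid picture and the role of the singular values of the inverse gradient more explicit, which is what generalizes most transparently to the Riemannian setting alluded to in the introduction.
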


\begin{example}[Discrete Space] 
  Let $(X,m)$ be a metric space such that $m(x,y)=1$ for all $x\neq y$, and let $\mu$ 
  be the uniform probability on $X$. Then for every Lipschitz $T:X \rightarrow \Rn$ there is 
  $\theta \in \Rn$ such that 
  \begin{equation}
    \label{discr_space_bound}
     \LtwoNorm{\theta} \leq \frac{c}{\sqrt{n}}. 
  \end{equation}
  Note that, 
  interestingly enough, the size of the space, $|X|$, does not appear in this bound 
  (except that, of course, for $n>|X|$ we always have $\theta \in \Rn$ with 
  $\LtwoNorm{\theta}=0$). That is, one can not increase the minimal projection size of $X$ in, 
  say, 
  $\RR^{20}$, by adding more points. To prove (\ref{discr_space_bound}), note that 
  $(X,m)$ corresponds to a clique graph, and the Laplacian has a single non-zero eigenvalue, 
  which is of the order of a constant and has multiplicity $|X|-1$. 
\end{example}

\begin{example}[Combinatorial Cube] 
Here $X$ is the set $\dcube = \{0,1\}^d$, with the Hamming metric
\[m(x,y)=|\{i \in \{1,...,d\} \setsep x_i\neq y_i \}|\]
where  $x=x_1...x_d,y=y_1...y_d\in \dcube$ and $\mu$ is again taken to be the uniform probability measure. The non-zero eigenvalues of the Laplacian on $X$ are $\frac{4k}{d}$ with 
multiplicity $\combchoose{n}{k}$, $k=1,...,d$ (See, ex. \cite{Diac}). The $n$-th smallest eigenvalue is therefore 
of the order $\frac{\log(n)}{d}$ and we obtain that for every Lipschitz $T:X \rightarrow \Rn$, 
there is $\theta \in \nsphere$ such that 
\[
    \LtwoNorm{\theta} \leq c \cdot \frac{\sqrt{d}}{\sqrt{n} \cdot \sqrt{\log(n)}}
\]
\end{example}

Our approach to Theorem \ref{main_thm} extends the argument in \cite{GK} and, naturally,
 Theorem \ref{curves_GK} can also be seen as a consequence of the principle of Theorem \ref{main_thm}.  Indeed, the eigenvalues of the Laplacian on $[0,1]$ satisfy,  
up to multiplicative constants, that $\lambda_n \approx n^2$ (as can be seen by double differentiating sines and cosines). Hence the decay rate of $n^{-\half}\cdot (n^{2})^{-\half} = n^{-\tridva}$ in Theorem \ref{curves_GK}.

\section{Proof}
Fix a graph $(X,E)$ with the stationary measure $\mu$ on it. 
Let $T : X \rightarrow \Rn$ be a Lipschitz map. It will be convenient to introduce 
an additional assumption, that 
\begin{equation}
\label{mz_cond}
\int T(x) d\mu = 0. 
\end{equation}
With conditions of Theorem \ref{main_thm} and this assumption, we will 
show that there is $\theta \in \nsphere$ such that 
\begin{equation}
\label{mz_res}
    \LtwoNorm{\theta} = c \cdot n^{-\half} \lambda^{-\half}_{\floor{n/2}}.
\end{equation}
This implies the result for arbitrary Lipschitz $T :X \rightarrow \Rn$. Indeed, assume that 
$v:= \int Tx d\mu \neq 0$. Denote by $P$ the orthogonal projection onto the $n-1$ 
dimensional space orthogonal to $v$. Then the composition $P \circ T$ is a Lipschitz map 
into that space and $\int (P\circ T)(x) d\mu = 0 $, so (\ref{mz_res}) can be applied in dimension $n-1$.

The notion of a gradient on a graph is folklore, although not particularly frequent 
in the literature. We thus recall the definition. 

Define an inner product on $L(X)$ by
\[
     [f,g]_X = \int f(x)g(x) d\mu(x).
\]
Denote by $L(E)$ the set of real valued functions on the set of edges, $E$, and let 
$\nu$ be the uniform probability measure on $E$. We equip $L(E)$ with the inner product 
\[
     [f,g]_E = \int f(e) g(e) d\nu. 
\]
Fix an arbitrary orientation on $E$, i.e. 
for each edge $e=\{x,y\}$ choose in an arbitrary way an enumeration $v_e^1,v_e^2$ of the 
vertices of the edge. The gradient operator on $X$ is defined by 
$\grad : L(X) \rightarrow L(E)$,
\[
    (\grad f)(e) = f(v_e^1) - f(v_e^2).
\]
One can verify by direct computation that the usual relation $\lap = \grad^{*} \circ \grad$ holds, where $\grad^{*}$ is the Hilbert space adjoint of $\grad$. Note that the Laplacian does not depend on the orientation on $E$. 

Denote by $L_0(X)$ the subspace of $L(X)$ that is orthogonal to the constant functions. 
Since $Ker \lap$ is the space of the constant functions, the Laplacian is invertible on $L_0(X)$ and hence the gradient is also an invertible  
operator from $L_0(X)$ onto its image. The inverse of the gradient will be of importance in what follows and we denote it by $\Gamma : Im(\grad) \rightarrow L_0(X)$.

We also recall the notion of singular values of an operator. If $V$ and $W$ are (say, finite dimensional ) Hilbert spaces, and $A:V \rightarrow W$ is a linear operator, then 
$A^{*}A$ is a non-negative self-adjoint operator. Let $\{\lambda_i(A^{*}A)\}_{i=1}^{dim V}$ be the eigenvalues of $A^*A$, in non-increasing order, with multiplicities. Then singular values of the operator $A$ are the non-increasing sequence $s_i(A) = \sqrt{\lambda_i(A^{*}A)}$. 

The singular values of $A$ have a simple geometric interpretation. If $B_2(V)$ denotes 
the unit ball of $V$, then the image $A(B_2(V))$ is an ellipsoid in $W$ and $s_i(A)$ are 
precisely the lengths of the principal axes of this ellipsoid.  

The singular values satisfy $s_i(A) = s_i(A^{*})$ and singular values of a composition 
can be bounded by the following inequality due to Ky Fan. Let $A:U \rightarrow W$ and 
$B: W \rightarrow V$ be two operators on Hilbert spaces. Then, for every $i,j \geq 1$, 
\begin{equation}
\label{ky_fan}
s_{i+j-1}(BA) \leq s_i(A) s_j(B) 
\end{equation}

Finally, the Hilbert-Schmidt norm of an operator is defined by 
\[
   \|A\|_{HS} = \sqrt{tr A^{*} A } = \left(\sum_{i} s_i^2(A)\right)^{\half}. 
\]
Note that since $s_i$ is a non-increasing sequence, 
\begin{equation}
\label{hs_s_i_bound}
  s_i(A) \leq \frac{\|A\|_{HS}}{\sqrt{i}}
\end{equation}
for all $i$. 

More details on singular values can be found, for instance, in \cite{Bha} or \cite{GK}.

\begin{proof}[Proof of Theorem \ref{main_thm}]
As mentioned above, we assume that condition (\ref{mz_cond}) holds. 
Let $D_T$ denote the gradient of the map $T$, i.e. 
\[
D_T(e) = T(v_e^1) - T(v_e^2).
\] 
Since $T$ is Lipschitz, $D_T$ is bounded, i.e. $|D_T(e)|\leq 1$ for all $e\in E$. 

For every $\theta \in \Rn$ consider the function on $X$, $f_{\theta}(x) = <\theta, Tx>$. 
By (\ref{mz_cond}), $f_{\theta} \in L_0(E)$ and clearly  $(\grad f_{\theta}) (e)= <\theta, D_T(e)>$ and 
\[
f_{\theta} = \Gamma (<\theta, D_T>).
\]

Let 
\[
    B_{L_2}(X) = \Big\{f:X \rightarrow \RR \Bsetsep [f,f]_X\leq 1 \Big\}
\]
denote the unit ball in $L(X)$ and write $\LtwoNorm{\theta}$ in a dual form:
\[
   \LtwoNorm{\theta} = sup_{g \in B_2(X)} \int g(x) <\theta, Tx> d\mu .
\]

Then 
\begin{eqnarray*}
  \int g(x) <\theta, Tx> d\mu  = [g, f_{\theta}]_{X} = 
   [g, \Gamma \circ \grad f_{\theta}]_X = [\Gamma^{*} g, \grad f_{\theta}]_E 
\end{eqnarray*}

Next, write 
\begin{eqnarray*}
 [\Gamma^{*} g, \grad f_{\theta}]_E  = 
 \int (\Gamma^{*} g)(e) \cdot <\theta,D_T(e)> d\nu(e) = 
 <\theta, \int (\Gamma^{*} g)(e) \cdot D_T(e) d\nu(e) > .
\end{eqnarray*}

Denote by $\widetilde{D}: L(E) \rightarrow \Rn$ the operator that acts by 
\[\widetilde{D} u = \int u(e) \cdot D_T(e) d\nu\]. 

With this notation, 
\begin{equation}
\label{ell_intro}
   \LtwoNorm{\theta} = sup_{g\in B_2} <\theta, \widetilde{D} \circ \Gamma^{*} g> = 
   \sup_{\phi \in \Ell} <\theta,\phi> 
\end{equation}
where $\Ell = \widetilde{D} \circ \Gamma^{*} B_2$ 
is the dual (in $\Rn$) ellipsoid of the $\LtwoNorm{\cdot}$ norm.

Our aim is to bound the quantity 
\[
    \inf_{\theta \in \nsphere} \LtwoNorm{\theta}.
\]
By (\ref{ell_intro}), this quantity equals the length of the smallest 
principal axe of the ellipsoid $\Ell$. Since the lengths of the principal axes are 
the singular values of $\widetilde{D} \circ \Gamma^{*}$, we bound 
the singular values of this operator. 

The singular values of $\Gamma^{*}$ are given, $s_i(\Gamma^{*}) = \lambda_i^{-\half}$, where 
$\lambda_i$  are the non-zero eigenvalues of the Laplacian (in non-decreasing order, so that 
$s_i(\Gamma^{*})$ do not increase).  To bound the singular values of $\widetilde{D}$, we show that  
\begin{equation}
\label{D_T_HS}
  \|\widetilde{D}\|_{HS}\leq 1.
\end{equation}
Indeed, one readily verifies that 
\[
   \widetilde{D} \circ \widetilde{D}^{*} (\theta) = \int <\theta,D_T(e)>D_T(e) d\nu.  
\]
For fixed $e\in E$, the trace of an operator $\theta \mapsto <\theta,D_T(e)>D_T(e)$ is $|D_T(e)|^2\leq 1$, implying
(\ref{D_T_HS}).  Now, by (\ref{hs_s_i_bound}), $s_i(\widetilde{D}) \leq \frac{1}{\sqrt{i}}$ and an application of (\ref{ky_fan}) (with $i=j=n/2$) completes the proof. 
\end{proof}

\end{document}